\newtheorem{defi}{Definition}[section]
\newtheorem{thm}[defi]{Theorem}
\newtheorem{conj}[defi]{Conjecture}
\newtheorem{prop}[defi]{Proposition}
\newtheorem{example}[defi]{Example}
\newcommand{\s}{\ensuremath{\mathbf{S}}\xspace}
\newcommand{\sos}{\ensuremath{\mathbf{S} \circ \mathbf{S}}\xspace}
\newcommand{\srs}{\ensuremath{\mathbf{S} \circ \mathbf{r} \circ \mathbf{S}}\xspace}
\newcommand{\sis}{\ensuremath{\mathbf{S} \circ \mathbf{i} \circ \mathbf{S}}\xspace}
\newcommand{\scs}{\ensuremath{\mathbf{S} \circ \mathbf{c} \circ \mathbf{S}}\xspace}
\newcommand{\Id}{\mathrm{Id}}
\newcommand{\sym}{\mathfrak{S}}
\newcommand{\dd}{\text{-}}
\newcommand{\R}{\ensuremath{\mathcal{R}}\xspace}
\newcommand{\C}{\ensuremath{\mathcal{C}}\xspace}
\DeclareMathOperator{\indmax}{\mathrm{indmax}}
\DeclareMathOperator{\comp}{\mathrm{comp}}
\DeclareMathOperator{\lmax}{\mathrm{lmax}}
\DeclareMathOperator{\rmax}{\mathrm{rmax}}
\DeclareMathOperator{\lmin}{\mathrm{lmin}}
\DeclareMathOperator{\rmin}{\mathrm{rmin}}
\DeclareMathOperator{\ldr}{\mathrm{ldr}}
\DeclareMathOperator{\lir}{\mathrm{lir}}
\DeclareMathOperator{\rdr}{\mathrm{rdr}}
\DeclareMathOperator{\rir}{\mathrm{rir}}
\DeclareMathOperator{\zeil}{\mathrm{zeil}}
\DeclareMathOperator{\peak}{\mathrm{peak}}
\DeclareMathOperator{\valley}{\mathrm{valley}}
\DeclareMathOperator{\ddes}{\mathrm{ddes}}
\DeclareMathOperator{\dasc}{\mathrm{dasc}}
\DeclareMathOperator{\des}{\mathrm{des}} 
\DeclareMathOperator{\maj}{\mathrm{maj}} 
\DeclareMathOperator{\slmax}{\mathrm{slmax}}
\begin{document}

\title{Enumeration of permutations sorted with two passes through a stack and $D_8$ symmetries}

\author[M. Bouvel \and O. Guibert]{Mathilde Bouvel\addressmark{1,2} \and Olivier Guibert \addressmark{1}}

\address{\addressmark{1}LaBRI UMR 5800, Univ. Bordeaux and CNRS, Talence, France. \\ \addressmark{2}partially supported by ANR grants PSYCO (2011-JCJC) and MAGNUM (2010-BLAN-0204)}

\keywords{permutations, generalized patterns, stack sorting, symmetries of the square, Baxter permutations, generating trees}

\maketitle
\begin{abstract}
\paragraph{Abstract.}
We examine the sets of permutations that are sorted by two passes through a stack with a $D_8$ operation performed in between. From a characterization of these in terms of generalized excluded patterns, we prove two conjectures on their enumeration, that can be refined with the distribution of some statistics. The results are obtained by generating trees.

\paragraph{R\'esum\'e.}
On \'etudie les ensembles de permutations qui sont tri\'ees par deux passages dans une pile s\'epar\'es par une op\'eration du groupe $D_8$.  \`A partir d'une caract\'erisation de ces ensembles en termes de motifs exclus g\'en\'eralis\'es, on d\'emontre deux conjectures sur leur \'enum\'eration, qui peuvent \^etre raffin\'ees par la distribution de certaines statistiques. Ces r\'esultats sont obtenus \`a l'aide d'arbres de g\'en\'eration.
\end{abstract}

\section{Introduction}

\subsection{Permutations, diagrams and patterns}

A permutation of $\sym_n$ is a bijective map from $\{1, 2, \ldots, n \}$ to itself, $n$ being called the \emph{size} of the permutation. In our context, we will view permutations in two different ways. A permutation $\sigma$ of $\sym_n$ can be seen as a word $\sigma_1 \sigma_2 \ldots \sigma_n$ where $\sigma_i = \sigma(i)$ for all $i \in \{1, \ldots,n\}$, containing exactly once each letter from $1$ to $n$. It can also be seen as what we call its \emph{diagram}: an $n \times n$ grid with exactly one dot per row and per column, the dots being placed in cells of coordinates $(i, \sigma(i))$. 
For every element of a permutation $\sigma$, corresponding to the dot at coordinates $(i, \sigma(i))$ in its diagram, we call $i$ its \emph{index} and $\sigma(i)$ its \emph{value}. 

Recall that for $\sigma \in \sym_n$, its \emph{reverse} (resp. \emph{complement}, resp. \emph{inverse}) is the permutation $\mathbf{r}(\sigma)$ (resp. $\mathbf{c}(\sigma)$, resp. $\mathbf{i}(\sigma)$) defined by $\mathbf{r}(\sigma)(i) = \sigma(n+1-i)$  (resp. $\mathbf{c}(\sigma)(i) = n+1-\sigma(i)$, resp. $\mathbf{i}(\sigma)(i) = j$ such that $\sigma(j)=i$). These operations correspond respectively to symmetries w.r.t. a vertical axis, a horizontal axis and the south-west to north-east diagonal on the diagrams of the permutations (see Figure \ref{fig:sym}). Hence, these three operations generate the eight element group $D_8$ of the symmetries of the square. 

\begin{figure}[ht]
\begin{center}
\begin{tabular}{ccccccc}
\begin{tikzpicture}
\begin{scope}[scale=0.35]
\draw [help lines] (0,0) grid (6,6);
\node at (-1,-1) {~};
\node at (7,7) {~};
\draw (0.5,3.5) [fill] circle (.2);
\draw (1.5,0.5) [fill] circle (.2);
\draw (2.5,5.5) [fill] circle (.2);
\draw (3.5,1.5) [fill] circle (.2);
\draw (4.5,4.5) [fill] circle (.2);
\draw (5.5,2.5) [fill] circle (.2);
\end{scope}
\end{tikzpicture} & \qquad & 
\begin{tikzpicture}
\begin{scope}[scale=0.35]
\draw [help lines] (0,0) grid (6,6);
\node at (-1,-1) {~};
\node at (7,7) {~};
\draw (0.5,2.5) [fill] circle (.2);
\draw (1.5,4.5) [fill] circle (.2);
\draw (2.5,1.5) [fill] circle (.2);
\draw (3.5,5.5) [fill] circle (.2);
\draw (4.5,0.5) [fill] circle (.2);
\draw (5.5,3.5) [fill] circle (.2);
\draw[very thick, red] (3,-0.5) -- (3,6.5);
\end{scope}
\end{tikzpicture} & \qquad & 
\begin{tikzpicture}
\begin{scope}[scale=0.35]
\draw [help lines] (0,0) grid (6,6);
\node at (-1,-1) {~};
\node at (7,7) {~};
\draw (0.5,2.5) [fill] circle (.2);
\draw (1.5,5.5) [fill] circle (.2);
\draw (2.5,0.5) [fill] circle (.2);
\draw (3.5,4.5) [fill] circle (.2);
\draw (4.5,1.5) [fill] circle (.2);
\draw (5.5,3.5) [fill] circle (.2);
\draw[very thick, red] (-0.5,3) -- (6.5,3);
\end{scope}
\end{tikzpicture} & \qquad & 
\begin{tikzpicture}
\begin{scope}[scale=0.35]
\draw [help lines] (0,0) grid (6,6);
\node at (-1,-1) {~};
\node at (7,7) {~};
\draw (0.5,1.5) [fill] circle (.2);
\draw (1.5,3.5) [fill] circle (.2);
\draw (2.5,5.5) [fill] circle (.2);
\draw (3.5,0.5) [fill] circle (.2);
\draw (5.5,2.5) [fill] circle (.2);
\draw[very thick, red] (-0.3,-0.3) -- (6.3,6.3);
\draw (4.5,4.5) [fill] circle (.2);
\end{scope}
\end{tikzpicture} \\
Original permutation & & Reverse & & Complement & & Inverse \\
\end{tabular}
\caption{The diagram of permutation $\sigma = 416253 $, and its symmetries under $\mathbf{r}$, $\mathbf{c}$ and $\mathbf{i}$.} \label{fig:sym}
\end{center}
\end{figure}
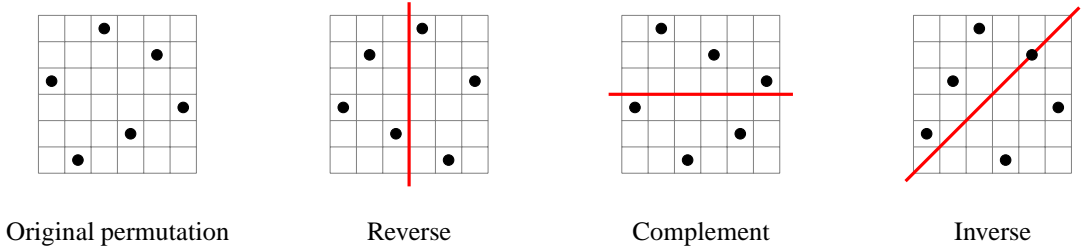

From the word representation of permutations, we inherit basic concepts like word concatenation, or subwords. A subword (with $k$ letters) of a permutation is however not a permutation in general, as its letters may not consist of all integers of $\{1, 2, \ldots, k\}$. Each subword of a permutation $\sigma$ can be \emph{normalized} to a permutation, that is then called a (classical) \emph{pattern} of $\sigma$. More precisely, a permutation $\pi = \pi_1 \pi_2 \ldots \pi_k$ is a (classical) \emph{pattern} of a permutation $\sigma = \sigma_1 \sigma_2 \ldots \sigma_n$ if and only if there exist integers $1 \leq i_1 < i_2 < \ldots < i_k \leq n$ such that $\sigma_{i_1}\ldots \sigma_{i_k}$ is order-isomorphic to $\pi$, \emph{i.e.} such that $\sigma_{i_{\ell}} < \sigma_{i_m}$ whenever $\pi_{\ell} < \pi_{m}$. The subsequence $\sigma_{i_1}\ldots \sigma_{i_k}$ is called an \emph{occurrence} of $\pi$ in $\sigma$. A permutation $\sigma$ that does not contain $\pi$ as a pattern is said to {\em avoid} $\pi$.

Some generalizations of permutation patterns are frequent in the literature, and we will make use of two of them here. 
When introducing \emph{dashes} in a pattern $\pi$, we impose adjacency constraints on the elements $\sigma_{i_{\ell}}$ of $\sigma$ that form an occurrence of $\pi$. Namely, a permutation $\pi = \pi_1 \pi_2 \ldots \pi_k$ with dashes between some pairs $\pi_{\ell} \pi_{\ell+1}$ is a \emph{dashed pattern} of a permutation $\sigma = \sigma_1 \sigma_2 \ldots \sigma_n$ if and only if there exist integers $1 \leq i_1 < i_2 < \ldots < i_k \leq n$ such that $\sigma_{i_1}\ldots \sigma_{i_k}$ is order-isomorphic to $\pi$ and $i_{\ell +1} = i_{\ell} +1$ whenever there is no dash between  $\pi_{\ell}$ and $\pi_{\ell+1}$ in $\pi$. Notice that classical patterns can be viewed as dashed patterns with dashes between $\pi_{\ell}$ and $\pi_{\ell+1}$ for any $\ell \in \{1, \ldots n-1\}$, and that a dashed pattern with no dash corresponds to a normalized \emph{factor} of the permutation. However, in this article, we take the convention that a pattern written with no dash is always a classical pattern. Pattern avoidance of dashed patterns is defined as in the classical case.

The other generalization of pattern avoidance we consider is the one of patterns with one barred element (or barred patterns for short). Consider a permutation $\pi$ with one barred element, and denote by $\pi^{\sim}$ the normalization of the subword of $\pi$ obtained when deleting the barred element. We say that a permutation $\sigma$ \emph{contains} the \emph{barred pattern} $\pi$ if there exists a (classical) occurrence of $\pi^{\sim}$ in $\sigma$ that cannot be extended into a (classical) occurrence of $\pi$ in $\sigma$. 
Consequently, a permutation $\sigma$ \emph{avoids} the barred pattern $\pi$ if every occurrence of $\pi^{\sim}$ in $\sigma$ can be extended into an occurrence of $\pi$ in $\sigma$.

We denote by $Av(\pi', \pi'', \ldots, \pi''')$ the set of permutations that simultaneously avoid $\pi', \pi'', \ldots$ and $\pi'''$.

\begin{example}
Permutation $\sigma = 316452$ avoids the classical pattern $2413$ but contains the classical pattern $\pi = 2431$, and the subwords $3642$ and $3652$ are its two occurrences in $\sigma$. Furthermore $\sigma$ avoids $24\dd 3\dd 1$ as a dashed pattern, as the elements corresponding to $2$ and $4$ are never at consecutive indices in an occurrence of $\pi$ in $\sigma$. Finally, $\sigma$ avoids the barred pattern $\tau = 3\bar{1}542$ as all the occurrences of $\tau^{\sim} = \pi$ can be extended with a smallest element to account for $\bar{1}$ .
\end{example}

\subsection{Some sorting operators on permutations}

Barred patterns have been introduced for characterizing the permutations that can be sorted by two passes of the stack sorting operator \s in \cite{West93} (see Theorem \ref{thm:characterization} below). We say that a permutation $\sigma \in \sym_n$ is sorted by an operator $\mathbf{Sort}$ when $\mathbf{Sort}(\sigma) = 1 2 \ldots n$.

The operation of sorting a permutation $\sigma = \sigma_1 \ldots \sigma_n$ through a stack is defined as follows. Consider a stack that satisfies the Hanoi condition, \emph{i.e.} such that the elements in the stack are in increasing order from the top to the bottom of the stack. Starting from $w = \sigma_1 \ldots \sigma_n$ and an empty stack, either put the first letter of $w$ on the stack (if this respects the Hanoi condition), or otherwise pop the element at the top of the stack. When $w$ and the stack are empty, a permutation has been output, which is the result of the stack sorting of $\sigma$. More formally, the \emph{stack sorting operator} is also classically characterized recursively by $\s(\varepsilon) = \varepsilon$ and $\s(LnR) = \s(L) \s(R) n$ where $n$ is the maximum of the word $LnR$ of distinct integers (which is not necessarily a permutation) and $\varepsilon$ denotes the empty word. Other sorting operators on permutations have been studied in the literature, in connexion with permutation patterns. This is the case in \cite{ClDuSt} for the \emph{tack sorting operator} $\mathbf{T}$ defined by $\mathbf{T}(\varepsilon) = \varepsilon$ and $\mathbf{T}(LnR) = \mathbf{T}(R) \mathbf{T}(L) n$ or in \cite{AlAtBoClDu11} for the \emph{bubble sort operator} $\mathbf{B}$ defined by $\mathbf{B}(\varepsilon) = \varepsilon$ and $\mathbf{B}(LnR) = \mathbf{B}(L) R n$.

The tack sorting operator can be easily characterized by the identity $\mathbf{T}(\sigma) = \s(\mathbf{r}(\sigma))$ for every permutation $\sigma$. Therefore, the compositions of these two sorting operators can be interpreted as $\s \circ \mathbf{T} = \s \circ \s \circ \mathbf{r}$ and $\mathbf{T} \circ \s = \srs$. Similarly, $\mathbf{T} \circ \mathbf{T} = \srs \circ \mathbf{r}$. Hence, following the line of \cite{West93} and looking for a characterization of the permutations that are sorted by these compositions of sorting operators, we are lead to the analysis of the permutations that are sorted by $\srs$. We actually address a rather more general question here: we characterize and enumerate permutations that are sorted by $\s \circ \alpha \circ \s$ for any $\alpha$ in the group $D_8$.

\subsection{Permutations sorted by the composition of two sorting operators}

For any sorting operator $\mathbf{Sort}$, let us denote by $\Id(\mathbf{Sort})$ the set of permutations that are sorted by $\mathbf{Sort}$, \emph{i.e.} $\Id(\mathbf{Sort}) = \cup_n \{\sigma \in \sym_n: \mathbf{Sort}(\sigma)= 1 2 \ldots n\}$. It has been known since \cite{Knuth:ArtComputerProgramming:1:1973} that $\Id(\s) = Av(2 3 1)$, and \cite{West93} has proved that $\Id(\sos) = Av(2 3 4 1, 3\bar{5} 2 4 1)$. Theorem~\ref{thm:characterization} below has been proved by \cite{thm1.2}.

\begin{thm} The sets of permutations that are sorted by $S \circ  \alpha \circ S$, for any $\alpha$ in $D_8$ are characterized by:
\vspace{-6.5pt}
\begin{enumerate}[(i)]\setlength{\itemsep}{-0.8pt}
 \item $\Id(\sos) = \Id(\mathbf{S}\circ \mathbf{i} \circ \mathbf{c} \circ \mathbf{r} \circ \mathbf{S}) = Av(2 3 4 1, 3\bar{5} 2 4 1)$;
 \item $\Id(\scs) = \Id(\mathbf{S}\circ \mathbf{i} \circ \mathbf{r} \circ \mathbf{S}) = Av(2 3 1)$;
 \item $\Id(\srs) = \Id(\mathbf{S}\circ \mathbf{i} \circ \mathbf{c} \circ \mathbf{S}) = Av(1 3 4 2, 31\dd 4\dd 2) = Av(1 3 4 2, 3 \bar{5} 1 4 2)$;
 \item $\Id(\sis) = \Id(\mathbf{S}\circ \mathbf{r} \circ \mathbf{c} \circ \mathbf{S}) = Av(3 4 1 2, 3\dd 4\dd 21)$.
\end{enumerate}
\vspace{-9pt}
\label{thm:characterization}
\end{thm}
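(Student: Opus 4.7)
The plan is to reduce the claim to four pattern-avoidance statements about the output of a single pass of stack sort, then treat each separately. For any $\alpha \in D_8$, Knuth's identity $\Id(\mathbf{S}) = Av(231)$ yields
\[
\mathbf{S} \circ \alpha \circ \mathbf{S}(\sigma) = \id_n \iff \alpha(\mathbf{S}(\sigma)) \in Av(231) \iff \mathbf{S}(\sigma) \in Av(\alpha^{-1}(231)).
\]
The $D_8$-orbit of $231$ is $\{231, 213, 132, 312\}$, and its stabilizer is the order-two subgroup $\{\id, \mathbf{i}\circ\mathbf{c}\circ\mathbf{r}\}$ (using $\mathbf{r}\circ\mathbf{c} = \mathbf{c}\circ\mathbf{r}$). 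The four cosets of this stabilizer are exactly the eight operators paired up in~(i)--(iv), so the whole theorem reduces to determining $\mathbf{S}^{-1}(Av(\pi))$ for each of the four patterns $\pi$ in the orbit.

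Case $\pi = 231$ is the classical result of~\cite{West93} and yields~(i). For $\pi = 213$, I plan to prove by induction on $n$ the stronger statement that $\mathbf{S}(\sigma) \neq \id_n$ forces $\mathbf{S}(\sigma)$ to contain a $213$-pattern. Writing $\sigma = L n R$ so that $\mathbf{S}(\sigma) = \mathbf{S}(L)\mathbf{S}(R) n$, either some value of $L$ exceeds some value of $R$---in which case these two values and the final $n$ already form an occurrence of $213$ in $\mathbf{S}(\sigma)$---or $L$ and $R$ are value-separated and the induction hypothesis applied to the unsorted one of $L, R$ produces the required $213$ inside $\mathbf{S}(L)$ or $\mathbf{S}(R)$. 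Combined with Knuth's theorem this yields $\Id(\scs) = Av(231)$, which is~(ii).

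Cases~(iii) and~(iv) amount to characterizing $\mathbf{S}^{-1}(Av(132))$ and $\mathbf{S}^{-1}(Av(312))$, and I would follow the blueprint of West's proof. Starting from an occurrence of $132$ (resp.\ $312$) in $\mathbf{S}(\sigma)$, I trace each of its three letters back to the step at which it was popped from the stack, thereby reading off the constraints imposed on $\sigma$. A first reduction is that $\mathbf{S}(\sigma)$ always ends with $n$, so no such occurrence can involve the last position; splitting along the recursion $\mathbf{S}(LnR) = \mathbf{S}(L)\mathbf{S}(R) n$, the occurrence is either confined to a single block (inductive case) or straddles the two. The straddling case is precisely where the two forbidden input patterns appear: a plain $4$-pattern ($1342$ in~(iii), $3412$ in~(iv)) when the middle letter of the output pattern was the top of the stack at the critical instant, and a dashed variant ($31\dd 4\dd 2$ or $3\dd 4\dd 21$) otherwise, in which the no-dash pair records an ascent or descent enforced between two consecutive input positions.

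The main obstacle is this case analysis in~(iii) and~(iv): one must establish both directions---necessity, that every occurrence of $132$ or $312$ in $\mathbf{S}(\sigma)$ traces back to one of the two listed input patterns, and sufficiency, that each listed input pattern, including the adjacency condition of its dashed variant, forces an occurrence of the output pattern. This combinatorial book-keeping has the same flavour and level of difficulty as the treatment of $3\bar{5}241$ in West's original proof, and no shortcut via a $D_8$-symmetry is available, since $\mathbf{S}$ does not commute with any non-trivial element of $D_8$.
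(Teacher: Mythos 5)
The paper does not actually prove Theorem~\ref{thm:characterization} --- it imports it from \cite{thm1.2} --- so your attempt can only be judged on its own terms. Your opening reduction is correct and is surely the right skeleton: since $\Id(\s)=Av(231)$, one has $\mathbf{S}\circ\alpha\circ\mathbf{S}(\sigma)=\id$ iff $\mathbf{S}(\sigma)\in Av(\alpha^{-1}(231))$; the $D_8$-orbit of $231$ is $\{231,213,132,312\}$ with stabilizer $\{\id,\mathbf{i}\circ\mathbf{c}\circ\mathbf{r}\}$, and its right cosets reproduce exactly the four pairings (i)--(iv) (indeed $\mathbf{c}^{-1}(231)=(\mathbf{i}\circ\mathbf{r})^{-1}(231)=213$, $\mathbf{r}^{-1}(231)=(\mathbf{i}\circ\mathbf{c})^{-1}(231)=132$, and $\mathbf{i}^{-1}(231)=(\mathbf{r}\circ\mathbf{c})^{-1}(231)=312$). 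Your proof of (ii) is complete and correct: in $\mathbf{S}(LnR)=\mathbf{S}(L)\mathbf{S}(R)n$, any inversion between $L$ and $R$ combines with the trailing $n$ into an occurrence of $213$, and otherwise $L$ and $R$ are value-separated and the induction applies to whichever of $\mathbf{S}(L)$, $\mathbf{S}(R)$ is unsorted; hence $\mathbf{S}(\sigma)$ avoids $213$ iff it is increasing, i.e.\ iff $\sigma\in Av(231)$. Part (i) is legitimately delegated to \cite{West93}.

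The genuine gap is in (iii) and (iv), and you have flagged it yourself. After your reduction these parts become the identities $\mathbf{S}^{-1}(Av(132))=Av(1342,\,31\dd 4\dd 2)$ and $\mathbf{S}^{-1}(Av(312))=Av(3412,\,3\dd 4\dd 21)$, and this is where essentially all of the content of the theorem lives, since (i) is West's theorem and (ii) is elementary. What you offer for these is a plan, not an argument: you establish neither necessity (that every occurrence of $132$, resp.\ $312$, in $\mathbf{S}(\sigma)$ --- necessarily straddling $\mathbf{S}(L)$ and $\mathbf{S}(R)$ in the base case of the induction on $\sigma=LnR$ --- forces one of the two stated patterns in $\sigma$, including the adjacency condition of the dashed one) nor sufficiency (that each stated input pattern, with its adjacency constraint, survives the sorting pass and produces the forbidden output pattern). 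Both directions are multi-case inductions over how the occurrence distributes among $L$, $n$ and $R$, and your heuristic for when the plain $4$-pattern versus the dashed variant arises is plausible but unverified. As written, (iii) and (iv) --- and hence the theorem --- remain unproved.
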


A natural question is then to look at the enumeration sequences $(c_n)$ of the sets $\mathcal{C}$ of pattern avoiding permutations that appear in Theorem~\ref{thm:characterization}. Of course, the set $Av(231)$ (which corresponds to one-stack sortable permutations) is enumerated by the Catalan numbers $Cat_n = \frac{1}{n+1} {{2n}  \choose n}$ (see \cite{Knuth:ArtComputerProgramming:1:1973}); and \cite{DGG98} proved that the set $Av(2 3 4 1, 3\bar{5} 2 4 1)$ of two-stack sortable permutations is enumerated by $\frac{2 (3n)!}{(n+1)! (2n+1)!}$. For the two other sets, conjectures on their enumeration have been proposed by~\cite{ClDuSt}, and refined with the distribution of some statistics. These conjectures are stated as Theorems~\ref{thm:conj6} and~\ref{thm:conj7} and Conjecture~\ref{conj:conj7} below. Theorems~\ref{thm:conj6} and~\ref{thm:conj7} are proved in the rest of this article, and the proof of Conjecture~\ref{conj:conj7} is a work in progress.

\begin{thm}
The two sets $\Id(\sos)$ and $\Id(\srs)$ are enumerated according to the size of the permutations by the same sequence.  Moreover, the $15$-tuple of statistics $( \des,\maj,\rmax,\lmax,\valley,\peak,\ddes,$ $\dasc, \zeil,\indmax,\rir,\rdr,\lir,\ldr,\slmax )$ has the same distribution on both sets.
\label{thm:conj6}
\end{thm}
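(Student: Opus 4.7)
The plan is to prove Theorem~\ref{thm:conj6} by building a generating tree for each of $\Id(\sos)$ and $\Id(\srs)$, showing that both trees admit the same succession rule once nodes are suitably labeled, and then enriching the labels to also track the 15 listed statistics so that the induced bijection preserves their joint distribution.

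Concretely, I would grow each $\sigma \in \sym_n$ by appending a new rightmost letter of value $k \in \{1, \ldots, n+1\}$, shifting the larger values of $\sigma$ up by one. For $\sigma \in \Id(\sos) = Av(2341, 3\bar{5}241)$, the values $k$ producing a permutation still in the class form a specific set of ``active sites'' whose description depends on local features of the prefix (the rightmost letter, the rightmost left-to-right maximum) together with combinatorial data about incomplete occurrences of $3\bar{5}241$. I would compress these features into a compact integer label and derive a succession rule. The analogous analysis for $\Id(\srs) = Av(1342, 3\bar{5}142)$ yields a second succession rule.

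Matching the two succession rules provides a size-preserving bijection between the classes. To upgrade to the full 15-tuple statement, I would enlarge each label so that all 15 statistics can be recovered, either directly from the label, or, for those that stabilize early in the growth such as $\lmax, \lir, \ldr, \zeil$, as frozen components set once a triggering step occurs. It then suffices to verify that the formula giving the updated statistics as a function of (label of $\sigma$, inserted value $k$) is the same on both sides. Since the remaining statistics $\des, \maj, \rmax, \valley, \peak, \ddes, \dasc, \indmax, \rir, \rdr, \slmax$ all depend on the right boundary of $\sigma$, rightmost-value insertion is the natural growth for which these updates are governed purely by the label and $k$.

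The main obstacle is the treatment of the barred patterns $3\bar{5}241$ and $3\bar{5}142$, whose avoidance is a \emph{non-local} constraint on the prefix: at each step, the set of active sites may depend on ``dangerous'' occurrences of the unbarred subpattern potentially hidden deep inside $\sigma$. Producing labels small enough that the two succession rules actually coincide, yet rich enough to faithfully encode these dangerous occurrences \emph{and} to drive the 15 statistic updates in lockstep, is where the real work lies. Once a common succession rule with matching statistic updates is obtained, Theorem~\ref{thm:conj6} follows immediately from the uniqueness of the labeled tree generated by a given root and succession rule.
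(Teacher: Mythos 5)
Your overall architecture --- a generating tree for each class under rightmost insertion, a common succession rule, then enriched labels driving the statistic updates --- is exactly the route the paper takes. But as written the proposal has a genuine gap: the entire content of the theorem lives in the step you explicitly defer (``producing labels\dots is where the real work lies''). You never exhibit the label, the succession rule, or any argument that the two rules coincide, and without that nothing is proved. The paper fills this gap not by attacking the barred patterns $3\bar{5}241$ and $3\bar{5}142$ head-on, but by first applying $\mathbf{c}\circ\mathbf{i}$ (Proposition~\ref{prop:bijection_sos_av(...)}), which carries $\Id(\sos)$ and $\Id(\srs)$ onto $Av(3214,\bar{2}4135)$ and $Av(3241,\bar{2}4153)$; these two classes already possess a common generating tree (Proposition 11 of \cite{DGG98}), and the \emph{Largest} insertion there becomes \emph{Rightmost} here. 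The resulting label is $(x,k,(p_1,\dots,p_k))$: the number of active sites, the number of right-to-left maxima, and the number of active sites above each right-to-left maximum. This is precisely the ``compact label rich enough to encode the dangerous occurrences'' you were looking for, and the non-locality you worry about is tamed because an insertion above the rightmost element kills \emph{all} sites below the relevant right-to-left maximum (except possibly the bottommost one), so the active-site structure is entirely captured by the $p_j$'s.

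Two smaller points. First, your claim that $\lmax$ ``stabilizes early'' and can be frozen is wrong under rightmost insertion: a new left-to-right maximum is created every time the new element lands in the topmost site, which can happen at any step; the paper's refinement $\R_{\Phi}^{\lmax}$ handles this by incrementing a counter exactly for that insertion. (By contrast, $\ldr$ and $\slmax$ really are triggered only in a degenerate configuration, which the paper detects by adding the size $n$ to the label and testing $q=n$.) Second, even once a common succession rule is in hand, each of the fifteen statistics requires a separate verification that its update is the same function of the label and the insertion site on both sides; the paper writes out three such refinements and asserts the others are similar, so this portion of your plan is consistent with what is actually done, but it is a per-statistic check, not a single uniform argument.
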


\begin{thm}
The set $\Id(\sis)$ is enumerated by the Baxter numbers.
\label{thm:conj7}
\end{thm}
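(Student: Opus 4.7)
The strategy, in line with the method used throughout the paper, is to build a generating tree for the set $\Id(\sis)$ whose succession rule coincides with a classical succession rule for Baxter permutations. By Theorem~\ref{thm:characterization}(iv), this set equals $Av(3412, 3\dd 4\dd 21)$, so the problem becomes the enumeration of permutations that simultaneously avoid the classical pattern $3412$ and the vincular pattern $3\dd 4\dd 21$.

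For the expansion, I would insert the new maximum $n+1$ at one of the $n+1$ possible positions in $\sigma \in Av(3412,3\dd 4\dd 21)\cap\sym_n$; call a position \emph{active} when the resulting permutation still lies in the class. Both avoidance conditions act locally with respect to this insertion, since $n+1$ is the only element that can play the role of the ``$4$'' in a new occurrence. Inserting $n+1$ at position $i$ creates an occurrence of $3412$ exactly when there exist indices $j<k<i\le \ell$ with $\sigma_\ell < \sigma_j < \sigma_k$, and it creates an occurrence of $3\dd 4\dd 21$ exactly when $i\le n$, $\sigma_i > \sigma_{i+1}$, and some $\sigma_j$ with $j<i$ satisfies $\sigma_j > \sigma_i$.

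I would then equip each $\sigma$ with a label $(h,k)$ recording the layout of its active sites, typically with $h$ counting sites to the right of the current maximum and $k$ counting those to its left, or a variant suggested by the detailed case analysis. By carefully tracing which sites are created or destroyed upon each insertion, the children of $\sigma$ should be shown to carry exactly the labels given by the classical Baxter succession rule
\[\Omega: \quad (1,1);\qquad (h,k) \leadsto (1,k{+}1),(2,k{+}1),\ldots,(h,k{+}1),(h{+}1,k),(h{+}1,k{-}1),\ldots,(h{+}1,1),\]
which is well-known to enumerate the Baxter numbers. The generating tree of $\Id(\sis)$ will then be isomorphic to the Baxter tree, yielding $|\Id(\sis)\cap\sym_n|=\Bax_n$.

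The main difficulty is the case analysis establishing $\Omega$, and in particular verifying the consistency property that the multiset of labels of the children of $\sigma$ depends only on the label of $\sigma$. Because the classical constraint $3412$ involves four scattered elements while $3\dd 4\dd 21$ forces a right-adjacent descent, the two patterns interact non-trivially when $n+1$ is inserted: a single insertion can simultaneously create active sites (by breaking up otherwise unavoidable bad configurations) and destroy others (by introducing adjacent descents just below earlier larger entries). Establishing that the pair $(h,k)$ alone suffices to determine the list of child labels will be the delicate part of the argument.
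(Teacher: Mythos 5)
Your approach breaks at its very first step: the choice of insertion rule. The class $\Id(\sis)=Av(3412,3\dd 4\dd 21)$ is \emph{not} closed under removal of the largest element, because deleting the maximum can make two positions adjacent and thereby create an occurrence of the vincular pattern $3\dd 4\dd 21$. Concretely, $34251$ avoids both $3412$ and $3\dd 4\dd 21$ (its only adjacent descents are $42$ and $51$, and neither is preceded by an ascent of two larger values), yet deleting the $5$ yields $3421$, which is itself an occurrence of $3\dd 4\dd 21$. Hence $34251$ has no parent in your tree, and the generating tree built by inserting a new maximum does not generate the whole class. This is exactly why the paper grows $\Id(\sis)$ with the insertion rule \emph{Smallest}: removing the minimum cannot create an occurrence of $3\dd 4\dd 21$, since if the deleted $1$ sat between the two positions of the would-be adjacent pair, then the original permutation already contained an occurrence with that $1$ playing the role of the final letter. (Your criterion for when an insertion creates a $3412$ --- indices $j<k<i\le\ell$ with $\sigma_\ell<\sigma_j<\sigma_k$ --- also describes a $2341$-type configuration rather than a $3412$, but that is secondary to the choice of insertion rule.)

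There is a second missing idea: even with a valid insertion rule you should not expect to land on the classical Baxter succession rule $\Omega$. The tree the paper obtains for $\Id(\sis)$ under the \emph{Smallest} rule, with labels $(r,s)$, root $(2,0)$ and productions $(r,s)\rightsquigarrow(i+1,r+s-i)$ for $1\le i\le r$ and $(r,s-j)$ for $1\le j\le s$, is provably \emph{not} isomorphic to the $\Omega$-tree: at level $3$ the multiset of numbers of children is $\{2,4,4,4,4,4\}$ for the former and $\{3,3,4,4,4,4\}$ for the latter (both summing to $22=b_4$, so the discrepancy is invisible to plain counting but rules out a tree isomorphism). Accordingly, the paper does not compare $\Id(\sis)$ with Baxter permutations directly; it exhibits the \emph{same} rewriting system for $Av(2\dd 14\dd 3,2\dd 41\dd 3)$, grown by insertion of a new maximum, and then invokes the previously known fact that this latter class is enumerated by the Baxter numbers. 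Your sketch needs either such an intermediate Baxter-enumerated class or a genuinely different argument to reach the conclusion.
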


\begin{conj}
The triple of statistics $(\des, \lmax, \comp)$ has the same distribution on $\Id(\sis)$ and on the set of Baxter permutations. 
\label{conj:conj7}
\end{conj}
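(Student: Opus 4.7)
My plan is to combine a component-decomposition reduction with a refinement of the generating tree that establishes Theorem~\ref{thm:conj7}.

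\textbf{Step 1: reduction to indecomposable permutations.} I would first observe that the class $\Id(\sis) = Av(3412, 3\dd 4\dd 21)$ is stable under direct sum~$\oplus$: any occurrence of $3412$ must lie in a single summand by value comparison, and the same is true for $3\dd 4\dd 21$ since the forced adjacency of the $2$ and the $1$ rules out the only mixed configuration. The class of Baxter permutations is also stable under $\oplus$. Since $\des$, $\lmax$ and $\comp$ are all additive under $\oplus$ while $\comp$ simply counts the number of summands, Conjecture~\ref{conj:conj7} reduces, via the standard product decomposition of the ordinary generating function marked by $\comp$, to the equidistribution of the pair $(\des, \lmax)$ on the \emph{indecomposable} elements of $\Id(\sis)$ and on the indecomposable Baxter permutations of each size.

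\textbf{Step 2: refined generating trees.} For this reduced statement I would enrich the succession rule used to prove Theorem~\ref{thm:conj7} on indecomposable elements of $\Id(\sis)$ so that each label records, in addition to the number of active insertion sites, enough local information to deduce how $\des$ and $\lmax$ change at each step. If permutations of size $n$ are grown into those of size $n+1$ by inserting the value $n+1$, a descent is created exactly when the chosen site is of descent type (a local condition), while the new number of left-to-right maxima equals one plus the number of former left-to-right maxima lying strictly to the left of the insertion site. Both updates can be read off a linearly ordered list of active sites, so they fit inside a succession rule with finitely many label components. In parallel, I would refine a classical two-label succession rule for Baxter permutations (whose labels count horizontal and vertical active sites) to carry the same pair of statistics, and look for a label-preserving isomorphism of the two refined trees.

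\textbf{Main obstacle.} The hardest part is expected to be the joint compatibility, inside the $\Id(\sis)$ tree, of the $\lmax$ bookkeeping with the dashed constraint $3\dd 4\dd 21$: that constraint couples the set of admissible insertion sites with the global profile of left-to-right maxima already present, so a naive refinement may enlarge the label set and break the match with the two-coordinate Baxter rule. If a direct tree isomorphism turns out to be too rigid, I would fall back on a bijective approach, attempting to realise both $\Id(\sis)$ and Baxter permutations as two encodings of the same decorated object (for instance via twin binary trees or non-intersecting lattice paths) in which $\des$ and $\lmax$ can be recovered coordinate by coordinate; together with Step~1, this would settle Conjecture~\ref{conj:conj7}.
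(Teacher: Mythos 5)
First, a point of calibration: the statement you are proving is stated in the paper as Conjecture~\ref{conj:conj7}, and the paper does \emph{not} prove it --- the authors explicitly describe its proof as work in progress. They prove only the enumerative statement (Theorem~\ref{thm:conj7}) by exhibiting a common succession rule $\R_{\Psi}$ for $\Id(\sis)$ and $Av(2\dd 14\dd 3, 2\dd 41\dd 3)$, and they observe that the remaining obstruction is precisely to transport the triple $(\des,\lmax,\comp)$ from Baxter permutations to $Av(2\dd 14\dd 3, 2\dd 41\dd 3)$ (e.g.\ through Giraudo's bijection via twin binary trees) before any refinement of the trees can be matched. Your proposal does not close this gap either: it is a plan, and you yourself flag its central step as unresolved.

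Concretely: your Step 1 is correct and is a genuine observation not made in the paper. Both $Av(3412,3\dd 4\dd 21)$ and the Baxter class are closed under direct sums and under taking sum-components (in every mixed splitting of $3412$, $3\dd4\dd21$, $2\dd41\dd3$ or $3\dd14\dd2$ across a direct sum, some prefix value would have to be smaller than a smaller suffix value, so no mixed occurrence exists); since $\des$ and $\lmax$ are additive under $\oplus$ and $\comp$ counts summands, the conjecture does reduce to the equidistribution of $(\des,\lmax)$ on sum-indecomposable elements. But Step 2 is where the actual content would have to be, and there it stops at ``fit the updates inside a succession rule and look for a label-preserving isomorphism.'' The difficulty is not merely bookkeeping: the only known tree for $\Id(\sis)$ (the paper's $\R_{\Psi}$, which uses the \emph{Smallest} insertion rule, not insertion of $n+1$ as you assume) is isomorphic to a tree for $Av(2\dd 14\dd 3, 2\dd 41\dd 3)$, a class that is \emph{not} the Baxter class, and no label-preserving isomorphism with a refined Baxter tree is exhibited or known. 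Declaring that one would ``look for'' such an isomorphism, and falling back on twin binary trees if it fails, restates the open problem rather than solving it. So the proposal should be assessed as a partial reduction plus a research programme, not a proof.
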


The definitions of the statistics of Theorem~\ref{thm:conj6} and Conjecture~\ref{conj:conj7} are briefly recalled in Table~\ref{table:def_of_statistics}. More detailed definitions can be found in~\cite{ClKi08} for instance.

\begin{table}[ht]
\begin{tabular}{|l|l|}
\hline
$\des$ & Number of descents\\
\hline
$\maj$ & Major index, \emph{i.e.} sum of the indices of the descents\\
\hline
$\comp$ & Number of components \\
& Largest $k$ such that $\pi$ can be written as the concatenation $\pi = \alpha_1 \cdot \ldots \cdot \alpha_k$\\
& with for all $i<j$, for all $a_i \in \alpha_i$ and $a_j \in \alpha_j$, $a_i < a_j$.\\ 
& The segments $\alpha_i$ are called the components of $\pi$.\\
\hline
$\rmax,\rmin, \lmax,\lmin$ & Number of right-to-left (resp. left-to-right) maxima (resp. minima)\\
\hline
$\valley, \peak, \ddes,\dasc$ & Number of valleys (resp. peaks, double descents, double ascents)\\
\hline
$\rir, \rdr, \lir, \ldr$ & Length of the rightmost (resp. leftmost) increasing (resp. decreasing) run\\
\hline
$\indmax$ & Index of the maximal element\\ 
\hline
$\zeil$ & Largest $k$ such that $n (n-1) \ldots (n-k+1)$ is a subword of $\pi$ with $n=|\pi|$\\
\hline
$\slmax$ & Largest $k$ such that $\pi_1 \geq \pi_i$ for all $i \in [1..k]$\\
\hline
\end{tabular}
\smallskip
\caption{Some classical statistics on permutations.}
\label{table:def_of_statistics}
\end{table}

The set $Bax$ of Baxter permutations has been first defined in~\cite{Bax64} and can be characterized by excluded dashed patterns as $Bax = Av(2 \dd 41 \dd 3, 3 \dd 14 \dd 2)$ (see~\cite{Ouc05} for example). In this article, we take this as the definition of Baxter permutations. The Baxter numbers $(b_n)$ enumerate the set of Baxter permutations, and we have $b_n = \frac{2}{n(n+1)^2} \sum_{k=1}^n {{n+1} \choose {k-1}} {{n+1} \choose {k}} {{n+1} \choose {k+1}}$ (see~\cite{CGHK78}).

Theorems~\ref{thm:conj6} and~\ref{thm:conj7} are proved using generating trees, and we recall the guidelines of this method in Section~\ref{sec:gen_trees}. Theorem~\ref{thm:conj6} is then proved in Section~\ref{sec:srs}. Section~\ref{sec:sis_enum} proves the enumerative result of Theorem~\ref{thm:conj7} by establishing a bijective correspondence (\emph{via} a generating tree) between $\Id(\sis)$ and $Av(2\dd 14 \dd 3, 2\dd 41 \dd 3)$, this set being enumerated by the Baxter numbers (see~\cite{Gui95} or more recently~\cite{Gir11}). But this does not allow to follow the three statistics $(\des, \lmax, \comp)$ of Conjecture~\ref{conj:conj7} directly. Going into further details of the bijection described by~\cite{Gir11} and examining how the statistics are transformed by this bijection is a promising path to a proof of Conjecture~\ref{conj:conj7}.

\section{Rewriting systems and generating trees}
\label{sec:gen_trees}

\emph{Generating trees} have been first introduced by \cite{Wes95} in the context of pattern avoiding permutations. \cite{BDPP99} have extended the definition of generating trees to other combinatorial objects, as well as formalized it by means of the \emph{ECO-method}. Here, we only recall some basics on generating trees and rewriting systems, before we apply this method in the proofs of Theorems~\ref{thm:conj6} and~\ref{thm:conj7}.

\subsection{Generating trees}

A {\em generating tree} is an infinite rooted tree associated to a set \C of permutations, whose vertices are permutations. The root is the permutation $1 \in \C$, and permutations at distance $n-1$ from the root are exactly the permutations of size $n$ in \C, whose set is denoted by $\C_n$.

There are several possible definitions of the edges of a generating tree. The four usual ones are as follow: the parent of a permutation $\pi \in \C_n$ is the permutation of $\C_{n-1}$ obtained from $\pi$ by removing the largest (resp. smallest, rightmost, leftmost) element of the permutation, and normalizing. This obviously defines uniquely the parent of a permutation $\pi \neq 1$.
Of course, depending on \C and on the operation of removing and normalizing that is chosen, it may result in a permutation that does not belong to \C. This is never the case when \C is a pattern class, \emph{i.e.} when the excluded patterns are classical patterns, but it may happen when the excluded patterns are generalized patterns. In the case of barred patterns, the situations in which it does not happen have been characterized in \cite[Prop. 6]{DGG98}.

The above describes the way a parent is obtained from any of its children in the generating tree. In order to build the tree efficiently from its root, it is certainly more convenient to describe instead how the children of a permutation $\pi$ are built from $\pi$. There are of course four possible {\em insertion rules} corresponding to the four possibilities of removing an element that have been described earlier. For each insertion rule, we define the {\em sites} of a permutation $\pi \in \C$ to be the places in which a new element may be inserted, resulting in a permutation $\pi'$. The locations of these sites that are described below are intented with respect to the diagram that represents permutation $\pi$. A site is said {\em active} when $\pi' \in \C$.

\vspace*{-0.25cm}
\paragraph*{Insertion rule {\em Largest}}
The sites of a permutation $\pi \in \C_n$ are above $\pi$, and are located at the beginning, at the end and between any two consecutive indices $i$ and $i+1$ of $\pi$. The children of $\pi$ are the permutations of $\C_{n+1}$ obtained when inserting a largest element in an active site of~$\pi$.

\vspace*{-0.25cm}
\paragraph*{Insertion rule {\em Smallest}}
The sites of a permutation $\pi \in \C_n$ are below $\pi$, and are located at the beginning, at the end and between any two consecutive indices $i$ and $i+1$ of $\pi$. The children of $\pi$ are the permutations of $\C_{n+1}$ obtained when inserting a smallest element in an active site of~$\pi$.

\vspace*{-0.25cm}
\paragraph*{Insertion rule {\em Rightmost}}
The sites of a permutation $\pi \in \C_n$ are to the right of $\pi$, and are located below, above and between any two consecutive values $i$ and $i+1$ of $\pi$. The children of $\pi$ are the permutations of $\C_{n+1}$ obtained when inserting a rightmost element in an active site of~$\pi$.

\vspace*{-0.25cm}
\paragraph*{Insertion rule {\em Leftmost}}
The sites of a permutation $\pi \in \C_n$ are to the left of $\pi$, and are located below, above and between any two consecutive values $i$ and $i+1$ of $\pi$. The children of $\pi$ are the permutations of $\C_{n+1}$ obtained when inserting a leftmost element in an active site of~$\pi$.

Notice that each of the four insertion rules corresponds to adding a new element on one of the four sides of the square around the diagram that represent the permutation (see Figure \ref{fig:sites}).

\begin{figure}[ht]
\begin{center}
\scalebox{0.7}{\input{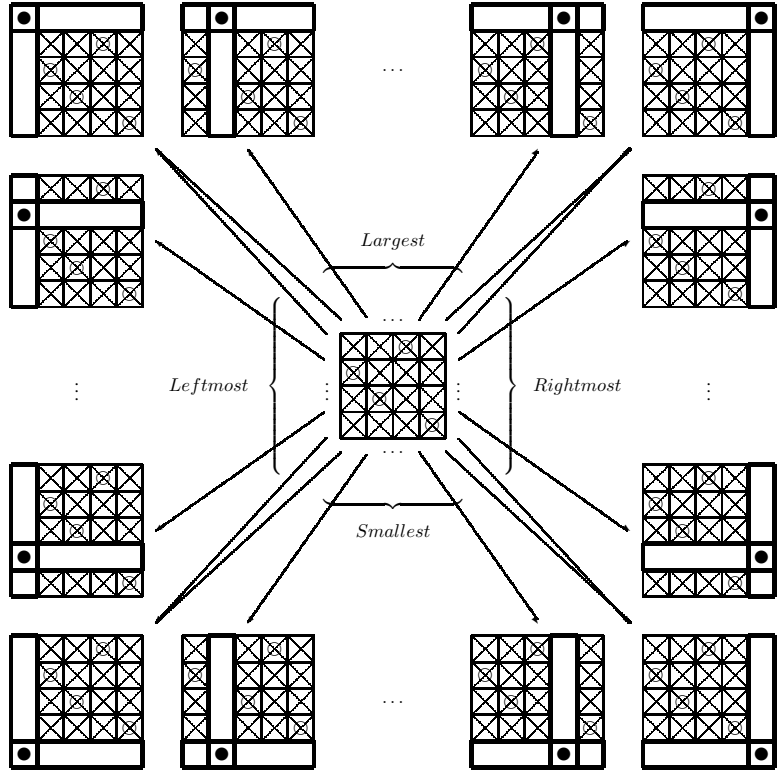}}
\end{center}
\caption{The sites corresponding to each insertion rule, on the diagram representation of permutation $\pi = 3241$, and the permutations resulting from the four insertion rules.} \label{fig:sites}
\end{figure}

\bigskip

\subsection{Rewriting system}

The shape of a generating tree associated to a set \C of permutations contains information, even without the permutations labeling the vertices, in particular for enumeration. Indeed, even when considering a generating tree where the permutations labeling the vertices have been erased, we still have a bijection between the vertices of this infinite tree and the permutations of \C, which maps the size of a permutation to the level a vertex in the tree (the level denoting the distance to the root $+1$ here).

{\em Rewriting systems} are a way to describe the shape of a generating tree without the need of labeling each vertex by a permutation. Instead, we can label the vertices of the tree by tuples -that are called \emph{labels}-, in such a way that the label of each vertex contains enough information on the corresponding permutation $\pi$ to build the labels of all the children of $\pi$. In general, these tuples indicate the values of some statistics on $\pi$, such as the size, the number of active sites, \ldots The shape of the tree is then completely described by a rewriting system on the labels, that encapsulate the parent-child relation on the permutations.

Such a rewriting system consists of a starting point $\ell_0$ (the label of permutation $1$) and a set of rewriting rules of the form $\ell \rightsquigarrow L$ with $L = \{ \ell_a, \ldots, \ell_p \}$, that describe the labels $\ell_a, \ldots, \ell_p$ of the children of a permutation whose label is $\ell$. Therefore, there is a bijection between permutations of size $n$ in \C and sequences of labels $(\ell_0, \ell_1, \ell_2 \ldots, \ell_{n-1})$ such that for any $i$, there is a rewriting rule $\ell_i \rightsquigarrow L_i$ in the system such that $\ell_{i+1} \in L_i$. 
Consequently, when the same rewriting system is obtained for two sets $\C$ and $\mathcal{D}$ of permutations, this implies that there is a bijection between $\C$ and $\mathcal{D}$, that preserves the size of the permutation. 
Notice that it is also possible to enrich the labels of the vertices to take into account the value of one or more statistics on the corresponding permutations. 
In the case there is a common rewriting system for $\C$ and $\mathcal{D}$ where the labels take into account statistics $(s_1, \ldots, s_k)$ in $\C$ and $(r_1, \ldots, r_k)$ in $\mathcal{D}$, then these statistics are equidistributed. Section \ref{sec:srs} and \ref{sec:sis_enum} provide examples of this use of generating trees.

\section{Permutations sorted by \srs}
\label{sec:srs}

\subsection{A simple rewriting system}

We prove that $\Id(\srs)$ is enumerated by the same sequence than $\Id(\sos)$, by describing a generating tree that is common for $\Id(\sos)$ and $\Id(\srs)$. This tree is in fact identical to the common generating tree for $Av(3 2 1 4, \bar{2} 4 1 3 5)$ and $Av(3 2 4 1, \bar{2} 4 1 5 3)$ given in \cite{DGG98}. Indeed, we have the following correspondence:

\begin{prop}
The $\mathbf{c} \circ \mathbf{i}$ operation provides a bijection between $\Id(\sos)$ (resp. $\Id(\srs)$) and $Av(3 2 1 4, \bar{2} 4 1 3 5)$ (resp. $Av(3 2 4 1, \bar{2} 4 1 5 3)$).
\label{prop:bijection_sos_av(...)}
\end{prop}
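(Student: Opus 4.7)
The plan is to exploit the fact that $\mathbf{c}\circ\mathbf{i}$ is an element of $D_8$, hence a size-preserving bijection on $\sym_n$. The key property I would establish is that any such symmetry preserves pattern avoidance, both classical and barred: $\sigma$ avoids $\pi$ iff $(\mathbf{c}\circ\mathbf{i})(\sigma)$ avoids $(\mathbf{c}\circ\mathbf{i})(\pi)$, where in the barred case the bar in the image is placed on the image of the originally barred dot. Given this invariance, the proposition reduces to four direct computations, namely $(\mathbf{c}\circ\mathbf{i})(2341) = 3214$, $(\mathbf{c}\circ\mathbf{i})(3\bar{5}241) = \bar{2}4135$, $(\mathbf{c}\circ\mathbf{i})(1342) = 3241$, and $(\mathbf{c}\circ\mathbf{i})(3\bar{5}142) = \bar{2}4153$, combined with the characterizations of $\Id(\sos)$ and $\Id(\srs)$ provided by Theorem~\ref{thm:characterization}.

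First I would prove the pattern-avoidance invariance, working entirely on diagrams. For a classical pattern $\pi$, an occurrence of $\pi$ in $\sigma$ is a subset of $|\pi|$ dots in the diagram of $\sigma$ whose relative positions realize $\pi$; a symmetry $f \in D_8$ permutes the dots of $\sigma$ bijectively onto the dots of $f(\sigma)$ and sends each occurrence of $\pi$ to an occurrence of $f(\pi)$, and conversely. Thus $\sigma$ avoids $\pi$ iff $f(\sigma)$ avoids $f(\pi)$. For a barred pattern $\pi$ with underlying classical pattern $\pi^{\sim}$, avoidance in $\sigma$ says that every occurrence of $\pi^{\sim}$ in $\sigma$ can be extended by a suitable extra dot to an occurrence of $\pi$. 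Since $f$ acts bijectively on all dots and preserves relative positions, an occurrence of $\pi^{\sim}$ in $\sigma$ extends iff its image in $f(\sigma)$ extends; this gives the invariance, with the bar in $f(\pi)$ placed on the $f$-image of the dot that carried the bar in $\pi$.

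The final step is the verification of the four image identities, each being an explicit calculation obtained by complementing and inverting while tracking the barred dot. For instance, starting from $3\bar{5}241$, applying $\mathbf{c}$ yields $3\bar{1}425$ (the barred value $5$ becomes $6-5=1$), and then applying $\mathbf{i}$ sends the barred dot at position $2$ with value $1$ to the dot at position $1$ with value $2$, giving $\bar{2}4135$. The other three identities are analogous, and the classical identities like $(\mathbf{c}\circ\mathbf{i})(2341) = 3214$ follow from the same computation without a bar to track.

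The only delicate point is the bookkeeping of the bar under the $D_8$ action: one must confirm that the ``bar follows the dot'' intuition leads to the correct value in the image, rather than, say, the same value or the same position as in the source. I would resolve this by systematically identifying the bar with a distinguished dot in the diagram and then reading off its new value after the symmetry; once this convention is in place, each computation is a one-line check and the proof is essentially mechanical.
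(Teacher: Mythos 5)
Your argument is correct and is precisely the (implicit) justification the paper relies on: the proposition is stated there without proof, being an immediate consequence of the fact that $D_8$ symmetries preserve avoidance of classical and barred patterns (with the bar carried along with its dot, exactly as you set it up), combined with the four explicit pattern computations. One caveat on bookkeeping: the paper composes right-to-left, so $\mathbf{c}\circ\mathbf{i}$ means ``invert, then complement,'' whereas your calculations apply $\mathbf{c}$ first and then $\mathbf{i}$ and therefore compute $\mathbf{i}\circ\mathbf{c}=(\mathbf{c}\circ\mathbf{i})^{-1}$; for instance $(\mathbf{c}\circ\mathbf{i})(2341)=\mathbf{c}(4123)=1432$, while it is $(\mathbf{i}\circ\mathbf{c})(2341)=\mathbf{i}(3214)=3214$. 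This is harmless for the statement, since the two maps are mutually inverse and either one gives the claimed bijection between the two sets (with $\mathbf{c}\circ\mathbf{i}$ itself sending $Av(3214,\bar{2}4135)$ onto $\Id(\sos)$ and $Av(3241,\bar{2}4153)$ onto $\Id(\srs)$), but your four displayed identities should be relabelled as images under $\mathbf{i}\circ\mathbf{c}$.
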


Because of the $\mathbf{c} \circ \mathbf{i}$ transformation, the insertion rule {\em Largest} that was used in the generating tree of \cite{DGG98} is naturally transformed into the insertion rule {\em Rightmost} in the generating tree for $\Id(\sos)$ and $\Id(\srs)$. This incremental construction of permutations where active sites are on the right is also known in the literature under the name of {\em staff representation} of permutations (see \cite{BFP05} for example).
Furthermore, for the same reason, the sites on the right for $\Id(\sos)$ and $\Id(\srs)$ are numbered from top to bottom, so that it mimics the numbering from left to right for $Av(3 2 1 4, \bar{2} 4 1 3 5)$ and $Av(3 2 4 1, \bar{2} 4 1 5 3)$.

\begin{thm}
The generating trees for both $\Id(\sos)$ and $\Id(\srs)$ with the insertion rule {\em Rightmost} are characterized by the following rewriting system
\begin{equation*}
\R_{\Phi} 
\begin{cases}
(2,1,(1)) & \\
(x,k,(p_1, \ldots p_k)) \quad \rightsquigarrow & (2+p_j, j, (p_1, \ldots, p_{j-1},i)) \quad \textrm{for } 1\leq j \leq k \textrm{ and } p_{j-1} < i \leq p_j \\
& (x+1, k+1, (p_1, \ldots p_k,i)) \quad \textrm{for } p_k < i \leq x
\end{cases}
\end{equation*}
where in the label $(x, k, (p_1, \ldots, p_{k}))$ of a permutation $\pi$, $x$ denotes the number of active sites of  $\pi$, $k$ is the number of right-to-left maxima in $\pi$, and $p_{\ell}$ denotes the number of active sites above the $\ell$-th right-to-left maximum in $\pi$ (for the decreasing order of their values).
\label{thm:system_conj6}
\end{thm}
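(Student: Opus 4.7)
The plan is to transport the rewriting system of~\cite{DGG98}, which characterizes the common generating tree of $Av(3214,\bar{2}4135)$ and $Av(3241,\bar{2}4153)$ with the \emph{Largest} insertion rule, over to $\Id(\sos)$ and $\Id(\srs)$ via the bijection of Proposition~\ref{prop:bijection_sos_av(...)}. Geometrically, the composed $D_8$ symmetry underlying that bijection acts on diagrams as a quarter-turn of the square, sending the top edge to a lateral edge; consequently, inserting a new largest element in $\tau$ corresponds through the bijection to inserting a new rightmost element in its counterpart $\pi$, and the sites numbered from left to right on top in~\cite{DGG98} correspond to our sites numbered from top to bottom on the right. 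This already shows that the two generating trees are isomorphic as unlabelled rooted trees.

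The next step is to check that the label $(x,k,(p_1,\ldots,p_k))$ records, on our side, the same combinatorial data as the label used in~\cite{DGG98}. The count $x$ of active sites is clearly preserved by the bijection. Under the quarter-turn, the right-to-left maxima of $\pi$ (points with no dot to their NE in the diagram) correspond to the right-to-left minima of its image, and the ordering of the former by decreasing value matches the ordering of the latter used in~\cite{DGG98}; similarly, ``active sites above the $\ell$-th right-to-left maximum of $\pi$'' translates to the $\ell$-th coordinate of the~\cite{DGG98} label. The base case $(2,1,(1))$ for $\pi = 1$ is then immediate (two active sites, one right-to-left maximum with one site above it), and the recursive rules of $\R_\Phi$ follow by direct transport: inserting a new rightmost element at site $i$ from the top with $p_{j-1} < i \leq p_j$ (setting $p_0 = 0$) destroys the right-to-left maxima of $\pi$ of ranks $j, j+1, \ldots, k$ (they acquire the new element to their NE), the first $j-1$ survive, and the new rightmost element itself becomes the $j$-th right-to-left maximum of the child, with $i$ active sites above it by construction; the case $p_k < i \leq x$, where the new element lies below every right-to-left maximum of $\pi$, is analogous and yields a new right-to-left maximum of rank $k+1$.

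The main subtle step is identifying the new total count of active sites, namely $2 + p_j$ in the $j$-th case and $x+1$ in the last, which is where the precise forbidden patterns enter. Under our dictionary this reduces to the corresponding count in~\cite{DGG98}; alternatively it can be verified directly from Theorem~\ref{thm:characterization} by inspecting which rightmost insertions into $\Id(\sos)$ (resp.\ $\Id(\srs)$) create a new occurrence of $2341$ or $3\bar{5}241$ (resp.\ $1342$ or $3\bar{5}142$). This routine but technical case analysis---which must be carried out on both sides of the correspondence---is the heart of the proof, and the fact that it yields the \emph{same} rewriting system in both cases is precisely what will guarantee the equidistribution asserted in Theorem~\ref{thm:conj6}.
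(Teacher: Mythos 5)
Your proof follows the same route as the paper's: reduce to Proposition 11 of \cite{DGG98} through the $\mathbf{c}\circ\mathbf{i}$ correspondence of Proposition~\ref{prop:bijection_sos_av(...)}, translate \emph{Largest} insertions into \emph{Rightmost} insertions with the top-to-bottom numbering of sites, and defer the verification of which sites are active to \cite{DGG98}. One concrete slip in your dictionary: since $\mathbf{c}(\mathbf{i}(3214))=2341$ and $\mathbf{c}(\mathbf{i}(3241))=1342$, the map $\mathbf{c}\circ\mathbf{i}$ goes \emph{from} $Av(3214,\bar{2}4135)$ (resp.\ $Av(3241,\bar{2}4153)$) \emph{to} $\Id(\sos)$ (resp.\ $\Id(\srs)$), so the \cite{DGG98}-side permutation is the \emph{counterclockwise} quarter-turn of $\pi$; hence the right-to-left maxima of $\pi$ (empty NE quadrant) correspond to the \emph{left-to-right maxima} of its counterpart (empty NW quadrant), which is the statistic actually recorded in the \cite{DGG98} label --- not to its right-to-left minima as you assert. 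This does not change the architecture of the argument, but carried out literally it would attach the wrong data to the coordinate $k$. Finally, while the paper also refers to \cite{DGG98} for the correctness of the construction, it does write down the outcome of the active-site analysis explicitly (insertion above the rightmost element of $\pi$: sites above the distinguished site $s_j$ flanking the $j$-th right-to-left maximum keep their status, the two new sites are active, and the sites below $s_j$ become inactive except, in the $\srs$ case, the bottommost one; insertion below the rightmost element: all statuses preserved and the two new sites active). That explicit description is what makes the transitions of $\R_{\Phi}$ checkable and is reused for the refinements $\R_{\Phi}^{\mathrm{stat}}$ in the following subsection, so you should state it rather than leave it as a ``routine but technical case analysis.''
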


In Theorem \ref{thm:system_conj6} and in its proof, we use the convention that $p_0 = 0$.

Figure \ref{fig:gen_tree} shows the first few levels of the generating tree corresponding to the rewriting system $\R_{\Phi}$.

\begin{figure}[ht]
\begin{center}
\scalebox{0.6}{\input{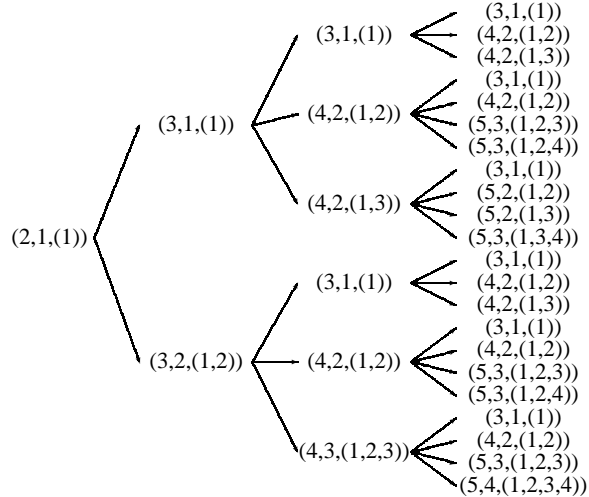}}
\end{center}
\vspace*{-0.7cm}
\caption{The first levels of the generating tree corresponding to the rewriting system $\R_{\Phi}$.} \label{fig:gen_tree}
\end{figure}

An immediate consequence of Theorem \ref{thm:system_conj6} is that the rewriting system $\R_{\Phi}$ provides a bijection $\Phi$ between $\Id(\sos)$ and $\Id(\srs)$ that preserves the size and the number of right-to-left maxima.

\begin{proof}
It is enough to use the bijective correspondence via the $\mathbf{c} \circ \mathbf{i}$ transformation between respectively $\Id(\sos)$ and $Av(3 2 1 4, \bar{2} 4 1 3 5)$, and $\Id(\srs)$ and $Av(3 2 4 1, \bar{2} 4 1 5 3)$. Indeed, Theorem \ref{thm:system_conj6} is a direct translation of Proposition 11 of \cite{DGG98} in this context. We however give the construction used in the proof, since it will have to be further analyzed in the next subsection. We omit the proof that this construction is correct, and refer the reader to \cite{DGG98} for details. 

%


Permutation $1$ belongs to $\Id(\sos)$ and $\Id(\srs)$, has two active sites, one right-to-left maximum, and one active site above this right-to-left maximum. 
Let us now examine the permutations that are obtained when inserting a rightmost element into the active site of a permutation $\pi$ of $\Id(\sos)$ (resp. $\Id(\srs)$) labeled $(x,k,(p_1, \ldots p_k))$. 

Consider $\pi \in \Id(\sos)$ (resp. $\Id(\srs)$), and one of its active site $s$, which is the $i$-th active site in the numbering from top to bottom. Denote by $\pi'$ the permutation obtained from $\pi$ by the insertion of a rightmost element in site $s$.

\begin{figure}[ht]
\begin{center}
\scalebox{0.5}{\input{insertion_2cases_SoS_SrS_large.jPicEdt}}
\end{center}
\vspace*{-0.7cm}
\caption{The two cases of insertion of a rightmost element into a permutation of $\Id(\sos)$ and $\Id(\srs)$.} \label{fig:insertion_conj6}
\end{figure}

Suppose first that site $s$ is above the rightmost element of $\pi$. Then define $j$ such that the largest right-to-left maximum of $\pi$ that is below $s$ is the $j$-th one, and $s_j$ to be the site that immediately below (resp. above) the $j$-th right-to-left maximum. Notice that in the case of $\Id(\sos)$, the site $s_j$ is always active in $\pi$. Then any site above $s_j$ (included) is active in $\pi'$ if and only if it was active in $\pi$, and the two sites that have been created around the inserted element are both active. As for the sites below $s_j$, they all become inactive (resp. all but the bottommost site, which is always active).

If on the contrary site $s$ is below the rightmost element of $\pi$, then any site is active in $\pi'$ if and only if it was active in $\pi$, and the two sites that have been created around the inserted element are both active. 

Figure \ref{fig:insertion_conj6} gives a graphical view of these two cases of insertion. In the first case, we have that $1 \leq j$, and $p_{j-1} < i  \leq p_j$, and the label of $\pi'$ is $(2+p_j, j, (p_1, \ldots, p_{j-1},i))$. In the second case, we have $p_k < i \leq x$ and the label of $\pi'$ is $(x+1, k+1, (p_1, \ldots p_k,i))$. 
\end{proof}

We can notice that in the above construction for $\Id(\sos)$ (resp. $\Id(\srs)$), the first, second and third (resp. last) sites, and the site above any right-to-left maximum are active (see \cite[Lemma 12, $(i)$ and $(ii)$]{DGG98} for the proof).

\subsection{Refinements of the rewriting system}

The rewriting system given by \cite{DGG98} for $Av(3 2 4 1, \bar{2} 4 1 5 3)$ and $Av(3 2 1 4, \bar{2} 4 1 3 5)$ is actually more precise, and takes into account the number of left-to-right maxima of a permutation $\pi$ but also the number of ascents in $\mathbf{i}(\pi)$. After the $\mathbf{c} \circ \mathbf{i}$ transformation, these statistics correspond to the number of right-to-left maxima and the number of descents for permutations in $\Id(\sos)$ and $\Id(\srs)$. Hence, these two statistics are preserved by the bijection $\Phi$ between $\Id(\sos)$ and $\Id(\srs)$. Actually every statistics $\mathrm{stat}$ of Theorem \ref{thm:conj6} is preserved by $\Phi$: for each of them, we can provide a refinement $\R_{\Phi}^{\mathrm{stat}}$ of the rewriting system $\R_{\Phi}$, thus completing the proof of Theorem \ref{thm:conj6}. Some of these rewriting systems are given below; the others are ommited for the sake of brevity but are obtained in a similar fashion.

\paragraph*{Number of left-to-right maxima}
The rewriting system $\R_{\Phi}$ can be refined as follows to account for the number of left-to-right maxima, in the part of the label denoted by $q$:
\begin{equation*}
\R_{\Phi}^{\lmax} 
\begin{cases}
(2,1,(1),1) & \\
(x,k,(p_1, \ldots p_k),q) \quad \rightsquigarrow & (2+p_1,1,(1),q+1)\\ 
& (2+p_j, j, (p_1, \ldots, p_{j-1},i),q) \quad \textrm{for } 1\leq j \leq k \\
& ~ \hspace*{4.45cm}~ \textrm{ and } p_{j-1} < i \leq p_j, i\neq 1 \\
& (x+1, k+1, (p_1, \ldots p_k,i),q) \quad \textrm{for } p_k < i \leq x
\end{cases}
\end{equation*}

\begin{proof}
The insertion into an active site does not change the number of left-to-right maxima of $\pi$, unless the insertion takes place in the topmost site of $\pi$. This site is always active, and in this case (corresponding to $i=j=1$), one left-to-right maximum is created.
\end{proof}

\paragraph*{Length of the leftmost decreasing run}
The rewriting system $\R_{\Phi}$ can be refined as follows to account for the length of the leftmost decreasing run, in the part of the label denoted by $q$:
\begin{equation*}
\R_{\Phi}^{\ldr} 
\begin{cases}
(2,1,(1),1,1) & \\
(x,k,(p_1, \ldots p_k),n,q) \quad \rightsquigarrow & (2+p_j, j, (p_1, \ldots, p_{j-1},i),n+1,q) \quad \textrm{for } 1\leq j \leq k \\
& ~ \hspace*{5.4cm} ~ \textrm{ and } p_{j-1} < i \leq p_j \\
& (x+1, k+1, (p_1, \ldots p_k,i),n+1,q) \quad \textrm{for } p_k < i < x \\
& (x+1, k+1, (p_1, \ldots p_k,x),n+1,q+\delta_{n,q})\\ 
&  ~ \hspace*{3.5cm} ~ \textrm{where } \delta_{n,q} \textrm{ is } 1 \textrm{ if } n=q \textrm{ and } 0 \textrm{ otherwise}
\end{cases}
\end{equation*}
It is necessary in this case to also account for more than just the value of the considered statistics in the label. Here, we furthermore introduced the size of the permutations, denoted by $n$.

\begin{proof}
The only case where the length of the leftmost decreasing run may change (being increased by $1$) is when $\pi$ is $n (n-1) \ldots 21$ and the insertion is performed in the bottommost site of $\pi$. For $\pi= n (n-1) \ldots 21$, the bottommost site is always active, for $\Id(\sos)$ and $\Id(\srs)$. An insertion in this special case is always an insertion in the $x$-th active site of $\pi$ and corresponds to the case where $q=n$.
\end{proof}

\paragraph*{The $\slmax$ statistics}

Recall that the $\slmax$ statistics is defined, for any permutation $\pi$, as the largest $k$ such that $\pi_1 \geq \pi_i$ for all $i \in [1..k]$. The rewriting system $\R_{\Phi}$ can be refined as follows to account for the $\slmax$ statistics in the part of the label denoted by $q$:
\begin{equation*}
\R_{\Phi}^{\slmax} 
\begin{cases}
(2,1,(1),1,1) & \\
(x,k,(p_1, \ldots p_k),q,n) \quad \rightsquigarrow & (2+p_1, 1, (1),q,n+1) \\ 
& (2+p_j, j, (p_1, \ldots, p_{j-1},i),q+\delta_{q,n},n+1) \quad \textrm{for } 1\leq j \leq k \\
& ~ \hspace*{4.9cm} ~ \textrm{ and } p_{j-1} < i \leq p_j, i\neq 1 \\
& (x+1, k+1, (p_1, \ldots p_k,i),q+\delta_{q,n},n+1) \quad \textrm{for } p_k < i \leq x\\
&  ~ \hspace*{3cm} ~ \textrm{where } \delta_{q,n} \textrm{ is } 1 \textrm{ if } q=n \textrm{ and } 0 \textrm{ otherwise}
\end{cases}
\end{equation*}
Here, we also introduced the size of the permutations, denoted by $n$.

\begin{proof}
The insertion in the topmost site of $\pi$ (which is always active) does not change the value of the $\slmax$ statistics. The insertion in any other site may change the value of this statistics, increasing it by $1$. This happens exactly when $\pi$ starts with its maximum, and this situation is characterized by the equality $q=n$.
\end{proof}

\section{Permutations sorted by \sis: enumerative result}
\label{sec:sis_enum}

We describe a common generating tree for $\Id(\sis)$ and $Av(2\dd 1 4 \dd 3, 2\dd 4 1 \dd 3)$. This is not the class of Baxter permutations, but they are equi-enumerated, as proved in~\cite{Gui95} or~\cite{Gir11}. This will prove Theorem~\ref{thm:conj7}.

\begin{thm}
The generating trees for both $\Id(\sis)$ and $Av(2\dd 1 4 \dd 3, 2\dd 4 1 \dd 3)$, with the insertion rules {\em Smallest} and {\em Largest} respectively, are characterized by the following rewriting system:
\begin{equation*}
\R_{\Psi} 
\begin{cases}
(2,0) & \\
(r,s) \quad \rightsquigarrow & (i+1,r+s-i) \textrm{ for } 1\leq i \leq r  \\
& (r,s-j) \textrm{ for } 1\leq j \leq s
\end{cases}
\end{equation*}
where the labels $(r,s)$ of a permutation $\pi$ are interpreted as follows:

For any permutation $\pi$ of size $n$ in $\Id(\sis)$:
\vspace{-6.5pt}
\begin{itemize}\setlength{\itemsep}{-0.8pt}
 \item $r$ is the index of the second element in the first ascent of $\pi$ (or $n+1$ if $\pi = n (n-1) \cdots 2 1$),
 \item $s$ is the number of active sites to the right of the first ascent of $\pi$.
\end{itemize}
\vspace{-6.5pt}

For any permutation $\pi$ of size $n$ in $Av(2\dd 1 4 \dd 3, 2\dd 4 1 \dd 3)$:
\vspace{-6.5pt}
\begin{itemize}\setlength{\itemsep}{-0.8pt}
 \item $r$ is the number of active sites to the left of $n$ plus $1$ (the site following $n$ immediately),
 \item $s$ is the number of active sites to the right of $n$ minus $1$ (the site following $n$ immediately).
\end{itemize}
\vspace{-9pt}
\label{thm:system_conj7}
\end{thm}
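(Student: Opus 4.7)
The plan is to verify separately that each of the two generating trees satisfies the rewriting system $\R_\Psi$. For $Av(2\dd 14 \dd 3, 2\dd 41 \dd 3)$ equipped with the insertion rule \emph{Largest}, the verification is a direct inspection that reproduces the generating tree of~\cite{Gui95,Gir11}: inserting the new maximum $n+1$ into any of the $r+s$ active sites yields a child whose label is determined by whether the insertion occurs in the left region (to the left of $n$ or just to its right) or further to the right; in both cases the count of newly active sites follows the claimed rule. This half introduces no new idea. The technical core, which I sketch below, is the verification for $\Id(\sis) = Av(3412, 3\dd 4 \dd 21)$ under the insertion rule \emph{Smallest}.

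Fix $\pi \in \Id(\sis)$ of size $n$ with label $(r,s)$, so that $r$ is the index of the second element of the first ascent (taken as $n+1$ when $\pi$ is fully decreasing) and $s$ is the number of active sites strictly to the right of the first ascent. The cornerstone of the argument is the following activeness lemma: every site $p$ with $1 \le p \le r$ is active. Indeed, both forbidden patterns $3412$ and $3\dd 4\dd 21$ have their smallest element preceded by an ascending pair ``$3\,4$''. Since the newly inserted element is the smallest in $\pi'$, it can only play the role of this ``$1$''. A forbidden pattern is therefore created at site $p$ only if $\pi$ has two positions $i_1 < i_2 < p$ with $\pi_{i_1} < \pi_{i_2}$; but for $p \le r$, all indices below $p$ lie in the strictly decreasing prefix $\pi_1 > \pi_2 > \cdots > \pi_{r-1}$, so no such ascending pair exists. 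Together with the definition of $s$, this shows that $\pi$ has exactly $r+s$ active sites, matching the number of children prescribed by $\R_\Psi$.

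Next I would compute the label of each child. For an insertion at site $i$ with $1 \leq i \leq r$, the new ``$1$'' lies inside (or just at the end of) the decreasing prefix, producing a descent between positions $i-1$ and $i$ of $\pi'$ and an ascent at $(i, i+1)$; hence $r' = i+1$. The identity $s' = r+s-i$ is equivalent to showing that $\pi'$ has $r+s+1$ active sites. This extra site accounts for the splitting of the insertion site $i$ of $\pi$ into the two new sites $i$ and $i+1$ of $\pi'$, both of which lie in the left region of $\pi'$ and are therefore active by the activeness lemma applied to $\pi'$. For an insertion at the $j$-th active site to the right of the first ascent of $\pi$ (with $1 \leq j \leq s$), the first ascent of $\pi$ is left untouched and thus $r' = r$; by a parallel counting argument, $s' = s-j$.

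The main obstacle is the delicate transport of activeness across the insertion. One must verify that, among the sites of $\pi'$ inherited from sites of $\pi$ in the right region, exactly the intended ones remain active---neither more, nor fewer. Concretely, for an insertion at the $j$-th active right site, one has to show that the $j-1$ active sites of $\pi$ lying strictly between the first ascent and the insertion position become inactive in $\pi'$, while the $s-j$ active sites lying strictly to the right of the insertion remain active. This is done by case analysis on how an occurrence of $3412$ or $3\dd 4 \dd 21$ could arise in the permutation obtained by a further insertion of a smallest into $\pi'$, exploiting once more the key principle that the pattern's ``$1$'' must be the freshly-inserted smallest element and that the previously-inserted smallest (now playing value $2$) forces certain ascending pairs to create or not create a forbidden pattern. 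Once this bookkeeping is settled, collecting the children and their labels yields precisely the rules of $\R_\Psi$.
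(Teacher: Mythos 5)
Your overall strategy coincides with the paper's: build the generating tree for $\Id(\sis)=Av(3412,3\dd 4\dd 21)$ under the insertion rule \emph{Smallest}, show the $r$ leftmost sites are active, and count how the active sites evolve; your ``activeness lemma'' for the sites $p\le r$ is correct and is essentially the paper's observation that the minimum of either forbidden pattern must be the newly inserted element, so an ascending pair is needed to its left. The problem is that the step you yourself identify as ``the main obstacle'' --- proving that an insertion at the $i$-th site with $i\le r$ deactivates \emph{nothing}, while an insertion at the $j$-th active site to the right of the first ascent deactivates \emph{exactly} $j$ active sites and no more --- is announced (``This is done by case analysis\dots Once this bookkeeping is settled\dots'') but never carried out. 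Since the labels of the children are precisely this bookkeeping, the theorem is not actually established by what you have written.

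The missing tool is a two-sided characterization of the \emph{inactive} sites, which is what the paper proves and uses: a site of $\pi$ is inactive if and only if there exist $a<b<c$ with $bca$ a subsequence of $\pi$ (an occurrence of $231$) and the site lies between $c$ and $a$ or immediately after $a$. With this in hand both counts are immediate: when the new minimum $m$ is inserted at a site $i\le r$ it is preceded only by the decreasing prefix, so $m$ cannot serve as the ``$a$'' of any new $231$ and no site changes status (hence $r+s+1$ active sites and $s'=r+s-i$); when $m$ is inserted at the $j$-th active site to the right of the first ascent $bc$, the new occurrence $bcm$ deactivates exactly the sites between $c$ and $m$ and the one just after $m$, i.e.\ the $j-1$ active sites passed over plus the two halves of the used site, and nothing to the right of $m$, giving $s'=s-j$. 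Your proposal never rules out that \emph{other} active sites to the right of the insertion point could be killed, nor that previously inactive sites could be resurrected; both directions of the iff are needed. A secondary, lesser issue is that the $Av(2\dd 14\dd 3,2\dd 41\dd 3)$ half is dismissed as ``direct inspection'': the paper at least isolates the three facts (the site just after $n$ is active; sites right of $n+1$ keep their status; sites left of $n$ are active iff they sit between adjacent left-to-right maxima) from which the labels $(i+1,r+s-i)$, $(r+1,s)$ and $(r,s-j)$ follow, and some such verification is needed to match the specific label interpretation claimed in the statement.
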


\begin{proof}
Let us first study $\Id(\sis) = Av(3412, 3 \dd 4 \dd 21)$ with the insertion rule {\em Smallest}. It is readily checked that if $\pi$ avoids $3412$ and $3 \dd 4 \dd 21$, then so does the permutation obtained from $\pi$ by deleting its smallest element and normalizing. This justifies that we have a generating tree, whose root $1$ is labeled $(2,0)$ according to the interpretation of the labels given in Theorem~\ref{thm:system_conj7}. It can further be noticed that a site which is inactive in $\pi$ cannot be active in any child of $\pi$.

It is easily proved that a site $x$ of $\pi$ is inactive if and only if there are elements $a<b<c$ such that $bca$ is a subsequence of $\pi$ and $x$ is either between $c$ and $a$ or follows $a$ immediately. In particular, every site that precedes the first ascent of $\pi$ and the site inside this first ascent are active. 

The number of such active sites correspond to $r$ in the label of $\pi$. The insertion of $1$ in such a site of $\pi$ (say the $i$-th such site) does not deactivate any site that was active in $\pi$, and the sites that were inactive in $\pi$ remain inactive. Hence, $r$ children of $\pi$ are obtained by insertions of this first type, whose labels are $(i+1, s+r-i)$ for $1\leq i \leq r$.

The other children $\pi'$ of $\pi$ are obtained by insertion of $1$ in active sites that are to the right of the first ascent of $\pi$, that we denote by $bc$. The insertion of $1$ in such a site (say the $j$-th such site) creates a subsequence $bc1$ in $\pi'$ so that the sites between $c$ and $1$ and just after $1$ are inactive in $\pi'$. So $j$ active sites to the right of the first ascent become inactive. For the other sites, as before they are active in $\pi'$ if and only if they were active in $\pi$. Hence, $s$ children of $\pi$ are obtained by insertions of this second type, whose labels are $(r, s-j)$ for $1\leq j \leq s$.

\smallskip

We now turn to the study of $Av(2\dd 1 4 \dd 3, 2\dd 4 1 \dd 3)$ with the insertion rule {\em Largest}. As above, if $\pi$ avoids $2\dd 1 4 \dd 3$ and $2\dd 4 1 \dd 3$, then so does the permutation obtained from $\pi$ by deleting its largest element, so that we have a generating tree. Its root $1$ is labeled $(2,0)$ according to the interpretation of the labels. Furthermore, denoting $n$ the size of $\pi$, the followings can be readily proved:
\vspace{-6.5pt}
\begin{itemize}\setlength{\itemsep}{-0.8pt}
 \item the site immediately to the right of $n$ is active;
 \item when inserting $n+1$ into an active site of $\pi$, the sites to the right of $n+1$ are active if and only if they were active in $\pi$;
 \item the sites to the left of $n$ are active if and only if they are located between two adjacent left-to-right maxima of $\pi$.
\end{itemize}
\vspace{-9pt}
With these three facts, a careful examination allows to prove that the insertion of $n+1$ in the $i$-th active site to the left of $n$, for $1\leq i \leq r-1$ (resp. in the site immediately to the right of $n$, resp. in the $j+1$-th active site to the right of $n$, for $1\leq j \leq s$), produces a child of $\pi$ labeled by $(i+1,r+s-i)$ (resp. $(r+1,s)$, resp. $(r,s-j)$).
\end{proof}

We claim that as in the case of Theorem~\ref{thm:conj6}, and although it requires to introduce additional statistics in the label, this rewriting system for $\Id(\sis)$ can be refined to follow the triple of statistics $(\des, \lmax, \comp)$. This however does not prove Conjecture~\ref{conj:conj7}, as we need first to examine how these statistics on Baxter permutations are transported on permutations avoiding $2\dd 1 4 \dd 3$ and $2\dd 4 1 \dd 3$, and second to refine the rewriting system for $Av(2\dd 1 4 \dd 3, 2\dd 4 1 \dd 3)$ according to these new statistics. The second part is likely to be solved when the statistics on $Av(2\dd 1 4 \dd 3, 2\dd 4 1 \dd 3)$ are known. However, they seem hard to characterize with the bijection of~\cite{Gui95}: because it relies on a generating tree, the one-to-one correspondence is not effective. On the contrary, \cite{Gir11} has recently given a \emph{constructive} bijection between $Av(2\dd 1 4 \dd 3, 2\dd 4 1 \dd 3)$ and the set of Baxter permutations. This bijection actually establishes a one-to-one correspondence from each of these to pairs of twin binary trees. Hence, a promising path to conclude the proof of Conjecture~\ref{conj:conj7} is to interpret the statistics $(\des, \lmax, \comp)$ on Baxter permutations as some triple of statistics on pairs of twin binary tree, and then to interpret these new statistics on the permutations of $Av(2\dd 1 4 \dd 3, 2\dd 4 1 \dd 3)$. This is indeed a work in progress.

\section*{Acknowledgements}

We thank Anders Claesson, Mark Dukes, and Einar Steingr\'imsson for bringing this problem to our attention, and for sharing their conjectures in \cite{ClDuSt}. We also thank Michael Albert, Mike Atkinson, Anders Claesson and Mark Dukes, with whom this research project started, during a visit of Mathilde Bouvel to Otago University.

\begin{small}
\bibliographystyle{abbrvnat}
\bibliography{biblio_final}
\end{small}

\end{document}